\titleformat{\section}{\Large}{\textbf{\thesection .}}{1em}{\textbf{#1}}
\newtheorem{thm}{Theorem}[section]
\newtheorem{cor}[thm]{Corollary}
\newtheorem{prop}[thm]{Proposition}
\theoremstyle{definition}
\theoremstyle{remark}
\numberwithin{equation}{section}
\renewenvironment{proof}{{\vspace{-1em}\quad \textit{Proof.}}}{\hfill$\square$}
\begin{document}
\renewcommand{\thefootnote}{\fnsymbol{footnote}}
\title{Generalized $K$-Shift Forbidden Substrings\\[0.1em] in Permutations}%
\author{\vspace{-3,5\baselineskip}Enrique Navarrete$^{\ast}$}\footnotetext[1]{Grupo ANFI, Universidad de Antioquia.}%
\date{}%
\renewcommand*{\thefootnote}{\arabic{footnote}}
\makeatletter
\def\maketitle{%
\bgroup
\par\centering{\LARGE\@title}\\[3em]%
\ \par
{\@author}\\[4em]%
\egroup
}

\maketitle
\begin{abstract}
  In this note we continue the analysis started in [2] and generalize propositions regarding permutations that avoid substrings\linebreak $12, 23,\ldots , (n-1)n$, (and others) to permutations that for fixed $k$,\linebreak $k < n$, avoid substrings $j(j+k)$, $1 \leq j \leq n-k$, as well as substrings $j(j+k)\ (\text{mod }n)$, $1 \leq j \leq n$, (\textit{ie}. $k$-shifts in general, as defined in Section 2).\\[1em]
  \textit{Keywords}: $k$-shifts $k$-successions, permutations, linear arrangements, \linebreak forbidden patterns substrings $(\text{mod }n)$, fixed points, bijections.
\end{abstract}
\section{Introduction and Previous Results}

In this section we summarize some results obtained in [2] and we recall the following definitions:\footnote[2]{Note: In [2], the term \textquotedblleft linear arrangement\textquotedblright\ was used instead of \textquotedblleft permutation\textquotedblright , and \textquotedblleft pattern\textquotedblright\ instead of \textquotedblleft substring\textquotedblright. Here we use the more conventional terminology. Permutations are meant to be in one-line notation.}
\begin{description}
  \item[$d_n$]  :=  the number of permutations on $[n]$ that avoid substrings\linebreak $12, 23,\ldots , (n-1)n$.
  \item[$D_n$] :=  the number of permutations on $[n]$ that avoid substrings\linebreak $12, 23,\ldots , (n-1)n, n1$.
  \item[$Der_n$] := the $n$th derangement number, \textit{ie}.
  \begin{equation}\label{eq1}
    Der_{n}=n!\sum_{k=0}^{n}\frac{(-1)^{n}}{k!}.
  \end{equation}
\end{description}

In [2] we discussed the existing result
\begin{equation}\label{eq2}
  d_n=\sum_{j=0}^{n-1}(-1)^j\binom{n-1}{j}(n-j)!.
\end{equation}
We also proved (Equation 2.1)
\begin{equation}\label{eq3}
  D_n=n!\sum_{k=0}^{n-1}\frac{(-1)^k}{k!},
\end{equation}
which is equivalent to
\begin{equation}\label{eq4}
  D_n=\sum_{j=0}^{n-1}(-1)^j\binom{n}{j}(n-j)!.
\end{equation}
Finally in Proposition 2.4, we proved that $D_n = Der_n + (-1)^{n-1}$, $n \geq 1$, which we called the \textquotedblleft alternating derangement sequence\textquotedblright\ since these numbers alternate plus or minus one from the derangement sequence itself. This is sequence A000240 in OEIS [3].

Now we extend the results to forbidden substrings that are not one space apart but $k$ spacings apart (what we call \textquotedblleft $k$-shifts\textquotedblright\ in the following section).

We define a minimal forbidden substring as two consecutive elements $jk$. We assign to this minimal substring a length equal to one. Hence the length of forbidden substrings will be one less than the number of elements.
\section{Main Lemmas and Propositions}
\subsection{Results for $\boldsymbol{\{d_n\}}$ and  $\boldsymbol{\{d_n^k \}}$}

For the sake of compactness, we define $\{d_n\}$ as the set of permutations on $[n]$ that avoid substrings $12, 23,\ldots ,(n-1)n$, with $d_n$ being the number of such permutations.

We generalize to $\{d_n^k \}$, $k<n$, defined as the set of permutations on $[n]$ that for fixed $k$, avoid substrings $j(j+k)$, $1\leq j\leq n-k$. We will refer to these substrings that are $k$ spacings apart as \textquotedblleft$k$-shifts\textquotedblright, or \textquotedblleft$k$-successions\textquotedblright. We let $d_n^k$ be the number of such permutations (the reason for power notation will become apparent in the next section).

The forbidden substrings in these permutations can be pictured as a diagonal running $k$ places to the right of the main diagonal of an $n\times n$ chessboard (hence the term \textquotedblleft$k$-shifts\textquotedblright) as can be seen in Figure \ref{Fig1} below.

\begin{figure}[h!]
  \centering
    \begin{tabular}{l|c|c|c|c|c|c|}
      \multicolumn{1}{c}{} & \multicolumn{1}{c}{1} & \multicolumn{1}{c}{2} & \multicolumn{1}{c}{3} & \multicolumn{1}{c}{4} & \multicolumn{1}{c}{5} & \multicolumn{1}{c}{6} \\
      \cline{2-7}
      1 &  &  & $\times$ &  &  &  \\
      \cline{2-7}
      2 &  &  &  & $\times$ &  &  \\
      \cline{2-7}
      3 &  &  &  &  & $\times$ &  \\
      \cline{2-7}
      4 &  &  &  &  &  & $\times$ \\
      \cline{2-7}
      5 &  &  &  &  &  &  \\
      \cline{2-7}
      6 &  &  &  &  &  &  \\
      \cline{2-7}
    \end{tabular}
  \caption{Forbidden positions in $\{d^2_6\}$.}\label{Fig1}
\end{figure}
\newpage
Figure \ref{Fig1} shows the forbidden positions on a $6\times 6$ chessboard that correspond to  forbidden substrings of permutations in $\{d^2_6\}$. The forbidden substrings are $\{13, 24, 35, 46\}$. Note that there are $n-k$ forbidden substrings in $\{d_n^k\}$.

The permutations that avoid these substrings are not too difficult to handle, and in fact we can count them for any $k$, as we show in the following proposition.

\begin{prop}\label{propo21}
For any fixed $k$, $0 < k < n$, if $d_n^k$ denotes the number of permutations that avoid substrings $j(j+k)$, $1 \leq j \leq n-k$, then
\begin{equation}\label{eq21}
  d_n^k=\sum_{j=0}^{n-k}(-1)^j\binom{n-k}{j}(n-j)!.
\end{equation}
\end{prop}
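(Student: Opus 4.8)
The plan is to count $d_n^k$ by inclusion--exclusion over the $n-k$ forbidden substrings. For each $j$ with $1 \le j \le n-k$, let $A_j$ denote the set of permutations of $[n]$ in which the value $j$ is immediately followed by the value $j+k$; then a permutation lies in $\{d_n^k\}$ precisely when it belongs to none of the $A_j$. Writing $A_S = \bigcap_{j\in S} A_j$ for $S \subseteq \{1,\ldots,n-k\}$, the sieve formula gives
\begin{equation*}
  d_n^k = \sum_{S \subseteq \{1,\ldots,n-k\}} (-1)^{|S|}\,|A_S|,
\end{equation*}
so everything reduces to evaluating $|A_S|$, and I would expect it to depend only on $|S|$.

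First I would record the structural fact that makes this count uniform. View each forbidden substring $j(j+k)$ as a directed edge $j \to j+k$ on the vertex set $[n]$. Since every edge raises the label by exactly $k$, each vertex has at most one outgoing and at most one incoming edge, and partitioning $[n]$ by residue class modulo $k$ exhibits the full edge set as a disjoint union of $k$ directed paths. Consequently no subset of these edges can create a branch or close up into a cycle: every $S$ corresponds to a disjoint union of directed sub-paths.

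Next I would count $|A_S|$ for $|S| = m$. Imposing all $m$ adjacencies in $S$ simultaneously amounts to gluing the relevant values into rigid blocks dictated by the sub-paths above; because the paths never branch or close up, these constraints are always jointly satisfiable and the internal order of each block is forced. Gluing $m$ compatible adjacencies merges the $n$ values into $n-m$ blocks, which can then be arranged freely, so $|A_S| = (n-m)!$, independent of which $m$ substrings were chosen. Substituting this into the sieve and collecting the $\binom{n-k}{m}$ subsets of each size $m$ yields
\begin{equation*}
  d_n^k = \sum_{m=0}^{n-k} (-1)^m \binom{n-k}{m}(n-m)!,
\end{equation*}
which is \eqref{eq21} after renaming the index $m$ to $j$.

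The one step deserving care, and the main obstacle, is the claim $|A_S| = (n-m)!$: it hinges entirely on the fact that a constant shift $k$ keeps the forbidden-adjacency graph a disjoint union of paths, so that the events $A_j$ never impose contradictory or overlapping demands. Were the shifts allowed to vary with $j$, a single value could be required to precede two different values, some subset could fail to be realizable, and the clean product $(n-m)!$ would break down.
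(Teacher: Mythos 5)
Your proposal is correct and follows essentially the same route as the paper: an inclusion--exclusion over the $n-k$ forbidden adjacencies, with the key count that imposing any $m$ of them leaves $(n-m)!$ permutations, hence $d_n^k=\sum_{j}(-1)^j\binom{n-k}{j}(n-j)!$. The only difference is that you explicitly justify the step the paper leaves implicit --- that the adjacency constraints decompose into disjoint directed paths (no branching, no cycles), so every subset of them is jointly realizable and merges $[n]$ into exactly $n-m$ blocks --- which is a welcome tightening rather than a different argument.
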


\begin{proof}
For any fixed $n$ and $k$, there are a total of
\[\binom{n-k}{j}(n-j)!\]
forbidden substrings of length $j$ since the combinatorial term counts the number of ways to get such substrings while the term $(n-j)!$ counts the permutations of the substrings and the remaining elements. Using inclusion-exclusion we get the result.
\end{proof}

We note that the case $k=1$ is just the result we had for $d_n$ in Equation \ref{eq2}.

For example, consider permutations in $\{d_5^3\}$. The $n-k$ forbidden substrings are $\{14, 25\}$. For $j=0$ we get the $5!$ permutations in $S_5$.  For $j = 1$ there are $\binom{2}{1}$ ways to choose one forbidden substring, and we can permute it with the remaining elements in $4!$ ways (for example, select the forbidden substring 14 and permute the 4 blocks 14, 2, 3, 5). For $j = 2$ there are $\binom{2}{2}$ ways to choose two forbidden substrings and we can permute them with the remaining elements in $3!$ ways (that is, permute the 3 blocks 14, 25, 3). Hence $d_5^3= 78$, so there are 78 permutations in $S_5$ that avoid substrings $\{14, 25\}$.

\begin{cor}\label{coro22}
The following relation holds for $d_n^k$\text{\normalfont :}
\begin{equation}\label{eq22}
  d_{n}^{k+1}=d_{n}^{k}+d_{n-1}^{k}.
\end{equation}
\end{cor}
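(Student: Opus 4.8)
The plan is to derive \eqref{eq22} directly from the closed form \eqref{eq21}, the engine being Pascal's rule $\binom{m}{j}=\binom{m-1}{j}+\binom{m-1}{j-1}$ applied to the binomial coefficients. To keep the index bookkeeping transparent I would first abbreviate $m=n-k$, so that the three relevant quantities become
\begin{align*}
d_n^{k+1} &= \sum_{j=0}^{m-1}(-1)^j\binom{m-1}{j}(n-j)!,\\
d_n^{k} &= \sum_{j=0}^{m}(-1)^j\binom{m}{j}(n-j)!,\\
d_{n-1}^{k} &= \sum_{j=0}^{m-1}(-1)^j\binom{m-1}{j}(n-1-j)!.
\end{align*}
Everything then reduces to manipulating the middle sum.

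The key steps are as follows. First I would expand the coefficient $\binom{m}{j}$ in $d_n^k$ by Pascal's rule and split the result into two sums. In the sum carrying $\binom{m-1}{j}$ the top term $j=m$ vanishes (since $\binom{m-1}{m}=0$), so that piece is exactly $d_n^{k+1}$. In the sum carrying $\binom{m-1}{j-1}$ the term $j=0$ vanishes, so it runs from $j=1$ to $m$; reindexing by $i=j-1$ turns its general term into $(-1)^{i+1}\binom{m-1}{i}(n-1-i)!$, which is precisely $-d_{n-1}^{k}$. Combining the two identifications yields $d_n^{k}=d_n^{k+1}-d_{n-1}^{k}$, which rearranges to the claimed recurrence.

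The only real obstacle is the boundary and index bookkeeping: one must check that the vanishing endpoint terms line up so that the summation ranges match $d_n^{k+1}$ and (after the shift) $d_{n-1}^{k}$, and that the sign introduced by the reindexing is tracked correctly. There are no convergence or analytic subtleties, so once the index shift is handled the identity falls out immediately. I would note in passing that the recurrence also admits a combinatorial reading through the rook-polynomial description of the diagonal forbidden board of Figure \ref{Fig1}, where $\binom{n-k}{j}$ is the number of ways to place $j$ non-attacking rooks on a length-$(n-k)$ diagonal and Pascal's rule reflects conditioning on whether the last forbidden cell is used; however, because the ambient factorials differ between $d_n^k$ and $d_{n-1}^k$, the direct algebraic computation above is the cleanest route.
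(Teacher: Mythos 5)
Your proof is correct and is essentially the paper's own argument: the paper's proof reads simply ``By Equation \ref{eq21} and elementary methods,'' and your application of Pascal's rule to the closed form \eqref{eq21}, with the endpoint terms vanishing and the index shift producing $-d_{n-1}^{k}$, is precisely the elementary computation being alluded to. You have merely made the bookkeeping explicit, which the paper leaves to the reader.
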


\begin{proof}
By Equation \ref{eq21} and elementary methods.
\end{proof}

Now we define $d_n^0:= Der_n$, which makes sense since in a chessboard of forbidden positions, a derangement is represented by an $X$ in the position $(j, j)$, \textit{ie}. a 0-shift.

Note that Equation \ref{eq22} generalizes the relation in Lemma 2.3 in [2], and we have the following equations starting at $n = 1$:
\begin{align*}
  d_n =&d_n^1 = Der_n+ Der_{n-1}\\
   & d_{n}^{2}= d_{n}+d_{n-1}\\
   & d_{n}^{3}= d_{n}^{2}+d_{n-1}^{2}\ \ \cdots
\end{align*}
Using the inital condition condition $d_2^1 = Der_2$, Equation \ref{eq22} defines a binomial- type relation, which, upon iteration, gives us the triangle in Table \ref{tab1} in the Appendix. Note in particular that for $k = n-1$, $d_n^k= n! - k!$.\footnote[3]{This triangle follows the same recurrence as the so-called Euler's Difference Table, which originally had no combinatorial interpretation. Euler's Table also has a $k!$ term in each column, terms that don't apply in our context of $k$-shifts.}

Note from the triangle that we may get $d_n^k$ starting only from derangement numbers. For example, to get $d_8^5$, \textit{ie}. the number of permutations of length 8 with forbidden substrings $\{16, 27, 38\}$, we can start from the upper-left corner of the table and by successive addition along the triangle we can reach cell $d_8^5= 27,240$ (or we can obviously use Equation \ref{eq21}). Hence there are 27,240 permutations in $S_8$ that avoid substrings $\{16, 27, 38\}$.

For further references, the sequence $\langle d_n^k\rangle$ is available in OEIS. For example, for $k = 3$, the sequence is now A277609 [4].

\subsection{Results for $\boldsymbol{\{D_n\}}$ and $\boldsymbol{\{D_n^k\}}$}

Now we define $\{D_n\}$ as the set of permutations on $[n]$ that avoid substrings $12, 23,\ldots , (n-1)n, n1$, with $D_n$ being the number of such permutations.

We generalize to define permutations without $k$-shifts (or $k$-successions) $(\text{mod }n)$, $\{D^k_n\}$, as the set of permutations on [$n$] that for fixed $k$, $k < n$, avoid substrings $j(j+k)$ for $1 \leq j \leq n-k$, and avoid substrings $j(j+k)$ $(\text{mod }n)$ for \mbox{$n-k < j \leq n$}. Note that we can summarize in the single definition \textquotedblleft avoid substrings\linebreak $j(j+k)$ $(\text{mod }n)$ for all $j$, $1 \leq j \leq n$\textquotedblright\ if we agree to write $n$ instead of 0 when doing addition $(\text{mod }n)$.

We let $D_n^k$ be the number of such permutations. These forbidden substrings are easily seen along an $n\times n$ chessboard, where for $j > n-k$, the forbidden\linebreak\newpage positions start again from the first column along a diagonal $(n-k)$ places below the main diagonal as in Figure \ref{Fig2} below.

\begin{figure}[h!]
  \centering
    \begin{tabular}{l|c|c|c|c|c|c|}
      \multicolumn{1}{c}{} & \multicolumn{1}{c}{1} & \multicolumn{1}{c}{2} & \multicolumn{1}{c}{3} & \multicolumn{1}{c}{4} & \multicolumn{1}{c}{5} & \multicolumn{1}{c}{6} \\
      \cline{2-7}
      1 &  &  & $\times$ &  &  &  \\
      \cline{2-7}
      2 &  &  &  & $\times$ &  &  \\
      \cline{2-7}
      3 &  &  &  &  & $\times$ &  \\
      \cline{2-7}
      4 &  &  &  &  &  & $\times$ \\
      \cline{2-7}
      5 & $\times$ &  &  &  &  &  \\
      \cline{2-7}
      6 &  & $\times$ &  &  &  &  \\
      \cline{2-7}
    \end{tabular}
  \caption{Forbidden positions in $\{D^2_6\}$.}\label{Fig2}
\end{figure}

Figure \ref{Fig2} shows the forbidden positions on a $6\times 6$ chessboard that correspond to  forbidden substrings of permutations in $\{D^2_6\}$.  These forbidden substrings are $\{13, 24, 35, 46; 51, 62\}$. The forbidden substrings below the diagonal are separated by a semicolon; these are the forbidden substrings $j(j+k)\ (\text{mod }n)$ for $n-k<j\leq n$.  Note that while there are only $n-k$ forbidden substrings in $\{d_n^k\}$, there are $n$ forbidden substrings in $\{D_n^k\}$.

It turns out that the numbers $D_n^k$ are more difficult to get. They depend on whether $n$ is prime, and more generally, on whether $n$ and $k$ are relatively prime.
\begin{prop}\label{propo23}
In the set of permutations $\{D_n^k \}$ with $0<k<n$, $k$ relative prime to $n$, $n \geq 2$, we can form a forbidden substring of length \mbox{$j = n-1$}.
\end{prop}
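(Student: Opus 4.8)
The plan is to exhibit an explicit permutation of $[n]$ that is itself a single forbidden substring, and then to check that the coprimality hypothesis is exactly what makes the construction valid. First I would unwind the length convention: since a minimal forbidden substring (two elements) has length one, a forbidden substring of length $n-1$ consists of $n$ consecutive elements, each adjacent pair being a forbidden $k$-shift $(\text{mod }n)$. As the permutation lies on $[n]$, such a substring must use every element of $[n]$ exactly once; so the task reduces to producing an ordering $a_1 a_2 \cdots a_n$ of $[n]$ in which $a_{i+1} \equiv a_i + k \pmod{n}$ for each $i$, $1 \le i \le n-1$ (with the convention that $n$ is written for $0$).

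For the construction I would fix a starting value, say $a_1 = 1$, and define $a_i \equiv 1 + (i-1)k \pmod{n}$ for $1 \le i \le n$. By construction every adjacent pair satisfies $a_{i+1} \equiv a_i + k \pmod{n}$, so each of the $n-1$ adjacencies is one of the forbidden substrings $j(j+k)\ (\text{mod }n)$; thus the whole word is a forbidden substring of length $n-1$, provided it is a genuine permutation.

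The one substantive point is to verify that $a_1, \ldots, a_n$ are distinct, and this is precisely where the hypothesis $\gcd(k,n)=1$ enters. If $a_i \equiv a_j \pmod{n}$, then $(i-j)k \equiv 0 \pmod{n}$, i.e. $n \mid (i-j)k$; coprimality forces $n \mid (i-j)$, and since $|i-j| < n$ this gives $i = j$. Hence the $a_i$ run through a complete residue system modulo $n$ and form a permutation of $[n]$, completing the construction. Equivalently, one invokes the standard fact that $k$ generates the additive group $\mathbb{Z}/n\mathbb{Z}$ iff $\gcd(k,n)=1$, so that repeatedly adding $k$ cycles through all $n$ residues before returning to the start.

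I do not expect a genuine obstacle here: the entire content is the recognition that a length-$(n-1)$ substring must exhaust $[n]$, together with the observation that coprimality is exactly the criterion for the arithmetic progression with common difference $k$ to be a complete residue system modulo $n$. The only care needed is bookkeeping with the ``write $n$ for $0$'' convention as the progression wraps past $n$, but this raises no real difficulty.
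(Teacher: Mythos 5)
Your proposal is correct and is essentially the paper's own proof in more explicit clothing: the arithmetic progression $a_i \equiv 1+(i-1)k \pmod{n}$ you construct is precisely the cycle through $1$ of the $k$-th power of the $n$-cycle $(12\ldots n)$, which is the paper's construction. The only difference is that you verify distinctness (the complete-residue-system fact) by hand, whereas the paper cites the standard fact that the longest cycle of $(12\ldots n)^k$ has length $n/(n,k)$, so that coprimality yields a single cycle of length $n$ and hence a forbidden substring of length $n-1$.
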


\begin{proof}
Start with forbidden substrings $12, 23,\ldots , (n-1)n$ in $\{D_n\}$ and form the permutation $(12\ldots n)$ in cycle notation. Since $k$-powers of the permutation produce forbidden $k$-shifts (or $k$-successions), we see that the longest cycle of\linebreak forbidden substrings will have length $n/(n,k)$, where $(n,k)$ stands for the greatest common divisor. Hence the longest cycle length of forbidden substrings will be achieved for $(n,k) = 1$, and in this case we will have a cycle of length $n$, which
represents a forbidden substring of length $n-1$.
\end{proof}

Note that the proof of the proposition justifies the power notation in $\{D_n^k\}$ (and in $k$-shifts in general). Note also that for $n \geq 2$, Proposition \ref{propo23} implies that we will always have the longest forbidden substrings of length $n-1$ in $\{D_n\}$, (\textit{ie.}$\ k = 1$), as well as in some other $\{D^k_n\}$ whenever $(n,k) =1$. Moreover, Proposition \ref{propo25} will show that the number of permutations in these sets are equal.

As an example, Figure \ref{Fig3} shows the maximum cycle lengths achieved for the case $n=6$ and all its possible $k$-shifts (or $k$-successions), $k = 1,\ldots , 5$. In this case there exist  maximum length substrings in $\{ D_6\}$ and $\{D^5_6\}$.

\begin{figure}[h!]
  \centering
  \begin{tabular}{|c|c|c|c|c|}
    \hline
    $k$ & Forbidden Substrings & Permutation & $k$th-power & \parbox{1.8cm}{\centering max cycle length} \\
    \hline
    $k=1$ & 12, 23, 34, 45, 56; 61 & (123456) & (123456) & 6 \\
    \hline
    $k=2$ & 13, 24, 35, 46; 51, 62 & (123456) & (135)(246) & 3 \\
    \hline
    $k=3$ & 14, 25, 36; 41, 52, 63 & (123456) & (14)(25)(36) & 2 \\
    \hline
    $k=4$ & 15, 26; 31, 42, 53, 64 & (123456) & (153)(264) & 3 \\
    \hline
    $k=5$ & 16; 21, 32, 43, 54, 65 & (123456) & (165432) & 6 \\
    \hline
  \end{tabular}
  \caption{$k$-shifts and maximum cycle lengths for $n=6$.}\label{Fig3}
\end{figure}

Note that Proposition \ref{propo23} is not true if $k$ is not relative prime to $n$, for example in $\{D^2_6\}$. In this case, the forbidden substrings are $\{13, 24, 35, 46; 51, 62\}$, and the maximum cycle length is 3, which means we can have forbidden substrings of length at most 2 (such as 135 or 246). (Recall that our minimal forbidden substring consists of two elements $jk$, with length equal to one).

\begin{cor}\label{coro24}
In the set of permutations $\{D_n^k \}$ with $k$ relative prime to $n$, $n \geq 2$, there exist forbidden substrings of any length $j$, $j = 1, 2,\ldots , n-1$.
\end{cor}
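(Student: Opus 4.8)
The plan is to show that once we have a forbidden cycle of the maximal length $n$ (guaranteed by Proposition \ref{propo23} when $(n,k)=1$), we can extract a contiguous forbidden substring of any desired length $j$ with $1 \leq j \leq n-1$ by simply truncating that cycle. First I would recall the key fact established in the proof of Proposition \ref{propo23}: when $k$ is relatively prime to $n$, the permutation $(12\ldots n)$ raised to the $k$th power is again a single $n$-cycle, and the consecutive entries of this $n$-cycle, read in one-line order, form a chain of overlapping forbidden $k$-shifts $a_1(a_1+k)(a_1+2k)\cdots \pmod n$ of total length $n-1$. The essential observation is that this is a \emph{maximal} forbidden substring built by concatenating $n-1$ minimal forbidden substrings, each successive pair overlapping in exactly one element.

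The core step is then almost immediate: any contiguous block of this length-$(n-1)$ forbidden substring is itself a forbidden substring. Writing the maximal substring as the sequence $s_0 s_1 \cdots s_{n-1}$ where each adjacent pair $s_{i-1}s_i$ is a forbidden $k$-shift (mod $n$), I would observe that for any target length $j$ with $1 \leq j \leq n-1$, the sub-block $s_0 s_1 \cdots s_j$ consists of $j+1$ elements and hence has length exactly $j$ in the paper's convention (length one less than the number of elements). Since every adjacent pair inside this sub-block is one of the forbidden $k$-shifts, the sub-block is a legitimate forbidden substring of length $j$. This gives existence for every $j$ in the stated range.

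I would also want to note for completeness that the length-one case ($j=1$) is just the presence of a single minimal forbidden substring $j(j+k)\pmod n$, which exists by the very definition of $\{D_n^k\}$, and that the construction degenerates gracefully to this case. The only genuine subtlety—and the step I would watch most carefully—is verifying that the truncation preserves the forbidden property at the mod-$n$ wraparound, i.e.\ that the substrings $j(j+k)\pmod n$ with $n-k < j \leq n$ are indeed forbidden and appear as links in the cycle. But this is handled precisely by the convention introduced before Proposition \ref{propo23}, under which \emph{all} $n$ substrings $j(j+k)\pmod n$ are forbidden, so no adjacent pair in the cycle falls outside the forbidden set. Thus the argument reduces to the elementary remark that a sub-chain of a chain of overlapping forbidden links is again a chain of forbidden links, and the corollary follows directly from Proposition \ref{propo23}.
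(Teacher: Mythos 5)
Your proposal is correct and follows essentially the same route as the paper: both invoke Proposition \ref{propo23} to obtain the maximal forbidden substring of length $n-1$ and then observe that any contiguous piece of it (the paper says ``split,'' you say ``truncate'') is itself a forbidden substring of the desired length $j$. Your version simply spells out the overlapping-links argument and the mod-$n$ wraparound detail more explicitly than the paper does.
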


\begin{proof}
By the previous proposition, for $(n,k) = 1$ we can get the longest forbidden substring of length $j = n-1$. Note that it can be considered either a single substring of length $n-1$ or $n-1$ substrings of length 1. Hence once this substring is obtained, we can split it to get forbidden substrings of any length $j$, $j = 1,\ldots , n-1$.
\end{proof}

\begin{prop}\label{propo25}
The number of permutations in $\{D_n^k \}$ with $0<k<n$,\linebreak $k$ relative prime to $n$, $n \geq 2$, is the same as the number of permutations in $\{D_n\}$.
\end{prop}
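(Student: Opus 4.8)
The plan is to exhibit an explicit bijection between $\{D_n\}$ and $\{D_n^k\}$ obtained by relabeling \emph{values} via multiplication by $k$ modulo $n$. Since $(n,k) = 1$, the map $\phi \colon [n] \to [n]$ defined by $\phi(x) \equiv kx \pmod n$, with the convention (as above) that we write $n$ in place of $0$, is a bijection of $[n]$ onto itself; its inverse is multiplication by $k^{-1} \pmod n$. Observe that $\phi(n) = n$, and that $\phi$ permutes $\{1,\ldots,n-1\}$ because $(n,k)=1$ forces $kx \not\equiv 0$ for $1 \le x \le n-1$.

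To each permutation $\sigma = \sigma(1)\sigma(2)\cdots\sigma(n)$ in one-line notation I would associate the permutation $\tau$ with $\tau(i) = \phi(\sigma(i))$ for $1 \le i \le n$. Since $\phi$ is a bijection of $[n]$, the word $\tau$ is again a permutation of $[n]$, and $\sigma \mapsto \tau$ is a bijection of $S_n$ whose inverse relabels values by $\phi^{-1}$. Crucially, this operation fixes every position and only renames values, so it transports each adjacency of $\sigma$ to an adjacency of $\tau$ at the same pair of positions.

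The main step is a one-line adjacency computation. The permutation $\sigma$ contains a $\{D_n\}$-forbidden substring exactly when some adjacent pair satisfies $\sigma(i+1) \equiv \sigma(i) + 1 \pmod n$. Applying $\phi$ gives $\tau(i+1) \equiv k\bigl(\sigma(i) + 1\bigr) \equiv k\sigma(i) + k \equiv \tau(i) + k \pmod n$, which is precisely a $\{D_n^k\}$-forbidden substring; conversely, if $\tau(i+1) \equiv \tau(i) + k \pmod n$, then $k\sigma(i+1) \equiv k\sigma(i) + k \pmod n$, and cancelling $k$ (legal since $(n,k)=1$) returns $\sigma(i+1) \equiv \sigma(i) + 1 \pmod n$. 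Hence $\sigma$ has a forbidden substring at positions $i,i+1$ if and only if $\tau$ does.

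It follows that $\sigma$ avoids every $\{D_n\}$-forbidden substring if and only if $\tau$ avoids every $\{D_n^k\}$-forbidden substring, so the bijection $\sigma \mapsto \tau$ of $S_n$ restricts to a bijection from $\{D_n\}$ onto $\{D_n^k\}$, whence $D_n^k = D_n$. I do not anticipate a real obstacle: the hypothesis $(n,k)=1$ enters in exactly two places, namely guaranteeing that $\phi$ is a bijection and permitting the cancellation of $k$, both immediate. The only point demanding care is the $n$-for-$0$ convention, which must be applied consistently so that the wrap-around substrings $j(j+k)\pmod n$ are matched uniformly with the ordinary ones.
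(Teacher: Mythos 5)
Your proof is correct, but it takes a genuinely different route from the paper. The paper argues enumeratively: using Proposition \ref{propo23} and Corollary \ref{coro24} (the cycle-structure argument showing that when $(n,k)=1$ forbidden substrings of every length $j=1,\ldots,n-1$ exist), it counts $\binom{n}{j}$ choices of $j$ forbidden substrings times $(n-j)!$ arrangements, and concludes by inclusion-exclusion that $D_n^k$ satisfies the same closed formula as $D_n$ (Equation \ref{eq23} versus Equation \ref{eq4}). You instead exhibit an explicit bijection $\sigma \mapsto \phi\circ\sigma$, where $\phi(x)\equiv kx \pmod n$, and verify that it carries the adjacencies $\sigma(i+1)\equiv\sigma(i)+1$ to $\tau(i+1)\equiv\tau(i)+k$ and back; this sidesteps all counting, makes the role of $(n,k)=1$ completely transparent (invertibility of $k$ modulo $n$), and needs neither Proposition \ref{propo23} nor Corollary \ref{coro24}. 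Your approach also yields more than the statement asks: multiplication by any unit $a$ modulo $n$ shows $D_n^{k_1}=D_n^{k_2}$ whenever $k_2\equiv ak_1 \pmod n$ for some invertible $a$, which holds precisely when $(n,k_1)=(n,k_2)$ --- the symmetry the paper only observes empirically from Table \ref{tab2}. What the paper's route buys in exchange is the explicit formula \eqref{eq23} for $D_n^k$ as a byproduct, which your bijection alone does not produce (though combined with Equation \ref{eq4} it recovers it). One small point of care in your write-up, which you flag yourself: the $n$-for-$0$ convention is handled consistently since $\phi(n)=n$, so the argument goes through as stated.
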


\begin{proof}
By the previous Corollary and Proposition, since for $k$'s such that $(n,k) = 1$, we can have forbidden substrings of any length, $j = 1,\ldots , n-1$. It is easy to count that there are exactly $\binom{n}{j}$ ways to get $j$ forbidden substrings (either disjoint or overlapping), and $(n-j)!$ permutations of these substrings and the remaining elements. Then by inclusion-exclusion we have that
\begin{equation}\label{eq23}
    D_n^k=\sum_{j=0}^{n-1}(-1)^j \binom{n}{j}(n-j)!.
\end{equation}
But this is the same as Equation \ref{eq4}, which counts the number of permutations in $\{D_n\}$.
\end{proof}

As an example of the previous proposition, consider again $n = 8$, $k = 5$. In this case, forbidden substrings in $\{D_8^5 \}$ are $\{16, 27, 38; 41, 52, 63, 74, 85\}$. It is easy to count, for instance, that there are $\binom{8}{4}$ forbidden substrings of length $j = 4$ and $(8-4)!$ permutations of these substrings and the remaining elements. For example, a disjoint substring of length 4 (alternatively, four substrings of length 1) is given by 1638, 74 and we count $(8-4)!= 4!$ permutations of the four blocks 1638, 74, 2, 5. Another substring of length 4 is given by 16385 and we count $4!$ permutations of the blocks 16385, 2, 4, 7.  Note that since $(8,5) = 1$, there are $\binom{8}{7}$ substrings of maximum length  $j=n-1=7$ (for example, 16385274), and we can permute these single blocks in $(8-7)!=1$ way since there are no remaining elements to permute them with. Then, by Equation \ref{eq23}, we see that $D_8^5= 14,832$ so there are this number of permutations in $S_8$ that avoid substrings $\{16, 27, 38; 41, 52, 63, 74, 85\}$. Note that $D^5_8<d^5_8$ since there are more forbidden substrings in $\{D^5_8\}$ than in $\{d^5_8\}$.\\[0.1em]
\begin{cor}\label{coro26}
In the set of permutations  $\{D_p^k\}$ with $p$ prime, we can form a substring of length $j = p-1$ for any $k$-shift, $k = 1, 2,\ldots , p-1$.
\end{cor}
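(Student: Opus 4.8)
The plan is to reduce this statement to Proposition \ref{propo23}, since the corollary is essentially a uniform specialization of that result to the prime case. First I would recall that Proposition \ref{propo23} guarantees a forbidden substring of maximal length $j = n-1$ in $\{D_n^k\}$ precisely when $0 < k < n$ and $(n,k) = 1$. The entire task is therefore to verify that the coprimality hypothesis is met for \emph{every} admissible $k$ when $n = p$ is prime.

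Next I would establish this coprimality claim. Since $p$ is prime, its only positive divisors are $1$ and $p$ itself. For any $k$ in the range $1 \leq k \leq p-1$ we have $k < p$, so $p$ cannot divide $k$, and the only remaining common divisor of $p$ and $k$ is $1$. Hence $(p,k) = 1$ holds uniformly across all $k = 1, 2, \ldots, p-1$. This is the key step, though it amounts to a one-line number-theoretic observation rather than a genuine difficulty.

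Finally, I would apply Proposition \ref{propo23} with $n = p$ to each such $k$. Because the hypotheses $0 < k < p$, $(p,k) = 1$, and $p \geq 2$ are all satisfied for every $k$ in the stated range, the proposition yields a forbidden substring of length $j = p-1$ in each set $\{D_p^k\}$, which is exactly the claim. I expect no computational obstacle here; the content of the corollary lies entirely in recognizing that primality forces the coprimality condition of the earlier proposition to hold for all shifts simultaneously, so that the maximal-length behaviour established for $\{D_n\}$ (the case $k=1$) in fact extends to every $k$-shift when $n$ is prime.
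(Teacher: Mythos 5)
Your proposal is correct and follows exactly the paper's approach: the paper's entire proof is the observation that $(p,k)=1$ for $k=1,2,\ldots,p-1$, after which Proposition \ref{propo23} applies. You have simply spelled out the same one-line coprimality argument in more detail.
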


\begin{proof}
$(p,k)=1$, $k=1,2,\ldots, p-1$.
\end{proof}

For example, for $p = 5$, we can get longest forbidden substrings of length 4 for all $k$-shifts, $k = 1,2,3,4$ by taking powers of the permutation $(12345)$, as done in Figure \ref{Fig3} above. For $k = 3$, for instance, a longest forbidden substring is given by $(12345)^3 = (14253) \rightarrow 14253$. This substring in $\{D^3_5\}$ corresponds to forbidden positions along a $5\times 5$ chessboard for the $k$-shift $k = 3$, as can be seen in Figure \ref{Fig4}.

\begin{figure}[h!]
  \centering
    \begin{tabular}{l|c|c|c|c|c|}
      \multicolumn{1}{c}{} & \multicolumn{1}{c}{1} & \multicolumn{1}{c}{2} & \multicolumn{1}{c}{3} & \multicolumn{1}{c}{4} & \multicolumn{1}{c}{5} \\
      \cline{2-6}
      1 &  &  &  & $\times$ &  \\
      \cline{2-6}
      2 &  &  &  &  & $\times$ \\
      \cline{2-6}
      3 & $\times$ &  &  &  &  \\
      \cline{2-6}
      4 &  & $\times$ &  &  &  \\
      \cline{2-6}
      5 &  &  & $\times$ &  &  \\
      \cline{2-6}
    \end{tabular}
  \caption{Forbidden positions in $\{D^3_5\}$.}\label{Fig4}
\end{figure}
Proposition \ref{propo25} and Corollary \ref{coro26} in turn imply:

\begin{cor}\label{coro27}
The number of  permutations in $\{D_p^k\}$ for any $k$-shift,\linebreak $k = 1, 2,\ldots ,  p-1$, is the same as the number of permutations in $\{D_p\}$,\linebreak $p$ prime.
\end{cor}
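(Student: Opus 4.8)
The plan is to reduce the statement entirely to Proposition \ref{propo25} by exploiting the defining property of a prime. First I would record the elementary number-theoretic observation that if $p$ is prime then its only positive divisors are $1$ and $p$; consequently, for every integer $k$ in the range $1 \leq k \leq p-1$ we have $\gcd(p,k) = 1$, i.e. each such $k$ is relatively prime to $p$. This is precisely the hypothesis required to apply Proposition \ref{propo25}, and it is the same fact that underlies Corollary \ref{coro26}.

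Second, I would invoke Proposition \ref{propo25} once for each admissible shift: for a fixed $k$ with $(p,k)=1$, that proposition gives $D_p^k = D_p$, the common value being
\begin{equation*}
  \sum_{j=0}^{p-1}(-1)^j\binom{p}{j}(p-j)!.
\end{equation*}
Since the previous step shows that \emph{every} $k \in \{1, 2, \ldots, p-1\}$ satisfies $(p,k)=1$, the equality $D_p^k = D_p$ holds simultaneously for all these shifts, which is exactly the assertion of the corollary. Here Corollary \ref{coro26} plays the supporting role of guaranteeing that the forbidden substrings behave uniformly for each $k$ (a maximal forbidden substring of length $p-1$ exists, and hence by splitting, substrings of every length $j = 1,\ldots,p-1$), so that the inclusion-exclusion count of Proposition \ref{propo25} is legitimately available for every shift in the range.

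There is no genuine obstacle in this final step: all of the combinatorial substance has already been discharged in establishing Proposition \ref{propo25}, where the count $\sum_{j}(-1)^j\binom{p}{j}(p-j)!$ was shown to be valid under the weaker coprimality hypothesis. The only new ingredient is the divisibility property of primes, so I would expect the argument to amount to a one-line deduction, with the work of verifying that the inclusion-exclusion formula counts $\{D_p^k\}$ correctly having been completed already.
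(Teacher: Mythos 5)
Your proposal is correct and matches the paper's own reasoning: the paper derives Corollary \ref{coro27} precisely by combining the prime-divisibility observation of Corollary \ref{coro26} (that $(p,k)=1$ for all $k=1,\ldots,p-1$) with Proposition \ref{propo25}, exactly as you do. Your write-up is simply a more explicit version of the paper's one-line deduction.
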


One can see from Table \ref{tab2} in the Appendix, for instance, that for $p = 5$ there are $45$ permutations in $\{D^k_5\}$ for any $k$-shift $k = 1, 2, \ldots, 4$.

The maximum cycle length achieved for a particular $n$ and $k$ is a very important statistic. In fact, for any fixed $n$, $k$-shifts that have the same maximum cycle length will produce the same number of permutations, as can be seen in\linebreak Table \ref{tab2} in the Appendix.\footnote[4]{No similar table appears in other references to our knowledge.} This table shows that there will be the same number of permutations in $\{D^{k_1}_n\}$ and $\{D^{k_2}_n\}$ whenever $(n, k_1) = (n, k_2)$, since then the maximum cycle lengths will be equal.
%
%

\newpage
\section*{References}

{\small

[1] R.A. Brualdi, Introductory Combinatorics (1992), 2nd edition.

[2] E. Navarrete, Forbidden Patterns and the Alternating Derangement Sequence, 2016.
\hspace*{0.5cm}arXiv:1610.01987 [math.CO], 2016.

[3] N. J. A. Sloane, The On-Line Encyclopedia of Integer Sequences, published electronically
\hspace*{0.5cm}at http://oeis.org, sequence A000240.

[4] N. J. A. Sloane, The On-Line Encyclopedia of Integer Sequences, published electronically
\hspace*{0.5cm}at http://oeis.org, sequence A277609.

[5]  R.P. Stanley, Enumerative Combinatorics, Vol. 1 (2011), 2nd edition.
}
\begin{center}
{\Large \textbf{APPENDIX}}
\end{center}
\appendix
\begin{table}[h!]
  \centering
  \begin{tabular}{|c|r|r|r|r|r|r|}
     \hline
      \hspace{0.37cm}$n$\hspace{0.37cm} & \hspace{0.18cm}$Der_n$\hspace{0.18cm} & \hspace{0.38cm}$d_n$\hspace{0.38cm} & \hspace{0.38cm}$d_n^2$\hspace{0.38cm} & \hspace{0.38cm}$d_n^3$\hspace{0.38cm} & \hspace{0.38cm}$d_n^4$\hspace{0.38cm} & \hspace{0.38cm}$d_n^5$\hspace{0.38cm} \\
     \hline
     1& 0& & & & &\\
     2& 1& 1& & & &\\
     3& 2& 3& 4& & &\\
     4& 9& 11& 14& 18& &\\
     5& 44& 53& 64& 78& 96&\\
     6& 265& 309& 362& 426& 504& 600\\
     7& 1,854& 2,119& 2,428& 2,790& 3,216& 3,720\\
     8& 14,833& 16,687& 18,806& 21,234& 24,024& 27,240\\
     \hline
   \end{tabular}
  \caption{Some values for $d_n^k$.}\label{tab1}
\end{table}

\begin{table}[h!]
  \centering
  \begin{tabular}{|c|r|r|r|r|r|r|}
     \cline{2-7}
      \multicolumn{1}{c|}{}& $k=1$\hspace{0.1cm} & $k=2$\hspace{0.1cm} & $k=3$\hspace{0.1cm} & $k=4$\hspace{0.1cm} & $k=5$\hspace{0.1cm} & $k=6$\hspace{0.1cm} \\
     \hline
$n = 2$& 0& & & & &\\
$n = 3$& 3& 3& & & &\\
$n = 4$& 8& 8& 8& & &\\
$n = 5$& 45& 45& 45& 45& &\\
$n = 6$& 264& 270& 240& 270& 264&\\
$n = 7$& 1,855& 1,855& 1,855& 1,855& 1,855& 1,855\\
$n = 8$& 14,832& 14,816& 14,832& 13,824& 14,832& 14,816\\
$n = 9$& 133,497& 133,497& 134,298& 133,497& 133,497& 134,298\\
     \hline
   \end{tabular}
  \caption{Some values for $D_n^k$.}\label{tab2}
\end{table}


\end{document}